\newcommand{\Gb}{\mathbf{G}}
\newcommand{\Mbsigma}{\mathbf{M}_{\sigma}}
\newcommand{\MbsigmaIndex}[1]{\mathbf{M}_{\sigma,#1}}
\newcommand{\Kbnu}{\mathbf{K}_{\nu}}
\newcommand{\Xbfoil}{\mathbf{X}_{\sigma}}
\newcommand{\XbfoilT}{\mathbf{X}^{\top}_{\sigma}}
\newcommand{\Xbsol}{\mathbf{x}_{\mathrm{sol}}}
\newcommand{\XbsolT}{\mathbf{x}^{\top}_{\mathrm{sol}}}
\newcommand{\Xbw}{\mathbf{x}}
\newcommand{\Mbar}{\bar{\mathbf{M}}}
\newcommand{\xbar}{\bar{\mathbf{x}}}
\newcommand{\xbarT}{\bar{\mathbf{x}}^{\top}}
\newcommand{\Pproj}{\bar{\mathbf{P}}}
\newcommand{\PprojT}{\bar{\mathbf{P}}^{\top}}
\newcommand{\Qproj}{\bar{\mathbf{Q}}}
\newcommand{\QprojT}{\bar{\mathbf{Q}}^{\top}}
\newcommand{\cur}{i}
\newcommand{\vol}{v}
\newcommand{\cb}{\mathbf{c}}
\newcommand{\cbT}{\mathbf{c}^{\top}}
\newcommand{\ub}{\mathbf{u}}
\newcommand{\ab}{\mathbf{a}}
\newcommand{\xb}{\mathbf{x}}
\newcommand{\fP}{\mathbf{f}_{\mathbf{P}}(\ab, \cur)}
\newcommand{\fQ}{\mathbf{f}_{\mathbf{Q}}(\ab, \cur, \vol)}
\newcommand{\fI}{f_{\cur}(\ab, \cur, \vol)}
\newcommand{\DIFF}{\frac{\mathrm{d}}{\mathrm{d}t}}
\newcommand{\zero}{\mathbf{0}}
\newcommand{\diff}{\ensuremath{\mathrm{d}}}
\newcommand{\dS}{\diff S}
\newcommand{\dV}{\diff V}
\newcommand{\ds}{\diff s}
\newcommand{\dsv}{\diff\vec{s}}
\newcommand{\Omegafw}{{\Omega_{\mathrm{fw}}}}
\DeclareMathOperator{\grad@word}{grad}
\DeclareMathOperator{\rot@word}{rot}
\DeclareMathOperator{\curl@word}{curl}
\DeclareMathOperator{\div@word}{div}
\newcommand{\grad@symbol}{\nabla}
\newcommand{\rot@symbol}{\nabla\times}
\newcommand{\curl@symbol}{\nabla\times}
\newcommand{\div@symbol}{\nabla\cdot}
\newcommand{\real}{\mathbb{R}}
\let\Grad\grad@word
\let\Curl\curl@word
\let\Div\div@word
\newcommand{\Av}{\ensuremath{\vec{A}}}
\newcommand{\Ev}{\ensuremath{\vec{E}}}
\newcommand{\Jv}{\ensuremath{\vec{J}}}
\newcommand{\widthconductor}[1][]{b_{\mathrm{c}#1}}
\newcommand{\widthinsu}[1][]{b_{\mathrm{i}#1}}
\newcommand{\foilwidth}{b}
\newcommand{\fillfactor}{\lambda}
\newcommand{\turns}{N}
\newcommand{\sigmacond}{\sigma_{\mathrm{c}}}
\newcommand{\sigmainsu}{\sigma_{\mathrm{i}}}
\newcommand{\nucond}{\nu_{\mathrm{c}}}
\newcommand{\nuinsu}{\nu_{\mathrm{i}}}
\newcommand{\Lalpha}{L_{\alpha}}
\newcommand{\Lbeta}{L_{\beta}}
\newcommand{\Lgamma}{L_{\gamma}}
\newcommand{\ellalpha}{\ell_{\alpha}}
\newcommand{\ellbeta}{\ell_{\beta}}
\newcommand{\ealpha}[1][]{\ensuremath{\vec{e}_{\alpha#1}}}
\newcommand{\ebeta}[1][]{\ensuremath{\vec{e}_{\beta#1}}}
\newcommand{\egamma}[1][]{\ensuremath{\vec{e}_{\gamma#1}}}
\newcommand{\vdf}{\vec{\zeta}}
\newcommand{\volfun}{\Phi}
\newcommand{\w}{\vec{w}} 
\newcommand{\poly}{p}
\newcommand{\Gold}{\mathbf{G}}
\newcommand{\Gnew}{\mathbf{G}_{e}}
\newcommand{\Gbsol}{G_{\textrm{sol}}}
\newcommand{\Psigma}{\mathbf{P}_{\sigma}}
\newcommand{\Qsigma}{\mathbf{Q}_{\sigma}}
\DeclareAcronym{fe}{short={FE}, long={finite element}}
\newtheorem{definition}{Definition}
\newtheorem{theorem}{Theorem}
\newtheorem{proposition}{Proposition}
\newtheorem{assumption}{Assumption}
\theoremstyle{remark}
\newtheorem*{remark}{Remark}
\newcommand{\customfont}{\fontsize{10}{0}}
\newcommand{\legendfont}{\fontsize{8}{0}}
\newcommand{\linecustom}{0.7pt}
\newcommand{\linecustomLcirc}{0.5pt}
\newcommand{\foilwinding}{
\draw[thick, domain=0:540, samples=100, smooth, line width=\linecustom] plot ({2.395-sin(\x)/50}, 0.8 + \x/600);
\draw[thick, domain=0:540, samples=100, smooth, line width=\linecustom] plot ({2.5-sin(\x)/50}, 0.8 + \x/600);
\draw[thick, domain=0:540, samples=100, smooth, line width=\linecustom] plot ({2.605-sin(\x)/50}, 0.8 + \x/600);}
\pgfplotsset{hide scale/.style={
/pgfplots/xtick scale label code/.code={},
/pgfplots/ytick scale label code/.code={}}}
\begin{document}

\title{A Stabilized Circuit-Consistent \\ Foil Conductor Model}
\date{}

\author{Elias Paakkunainen$^{a,b,}$\footnote{Corresponding author: elias.paakkunainen@tu-darmstadt.de}\, , Jonas Bundschuh$^{a}$, Idoia Cortes Garcia$^{c}$, 
        \\ Herbert De Gersem$^{a}$, and Sebastian Schöps$^{a}$
        \vspace{2em} \\
        {\small $^{a}$Institute for Accelerator Science and Electromagnetic Fields,} \vspace{-0.2em}\\
        {\small Technical University of Darmstadt, Darmstadt, Germany} \vspace{0.5em}\\
        {\small $^{b}$Electrical Engineering Unit,} \vspace{-0.2em}\\
        {\small Tampere University, Tampere, Finland} \vspace{0.5em}\\
        {\small $^{c}$Department of Mechanical Engineering - Dynamics and Control,} \vspace{-0.2em}\\
        {\small Eindhoven University of Technology, Eindhoven, Netherlands}
        }

\maketitle

\begin{abstract}
    The magnetoquasistatic simulation of large power converters, in particular transformers, requires efficient models for their foils windings by means of homogenization techniques.
    In this article, the classical foil conductor model is derived and an inconsistency in terms of circuit theory is observed, which may lead to time-stepping
    instability. This can be related to the differential-algebraic nature of the resulting system of equations.
    It is shown how the foil conductor model can be adapted to mitigate this problem by a modified definition of the turn-by-turn conductance matrix.
    Numerical results are presented to demonstrate the instability and to verify the effectiveness of the new adapted foil conductor model.
    \vspace{1.5em} \\
    Keywords: Foil conductor model; foil winding; differential algebraic equation; differential index; finite element method.
\end{abstract}    

\section{Introduction}
Low-frequency electromagnetic field models are typically connected to a circuit model consisting of lumped elements to excite them \cite{Salon_1990aa}. 
This is done by means of conductor models that distribute circuit voltages and currents as electric fields and currents over the spatially resolved computational domain back and forth.
The well-known terms solid and stranded conductor model have been coined in Ref.~\cite{Bedrosian_1993aa} and refined over the years, see e.g. Refs.~\cite{Dular_2001aa} and \cite{Tsukerman_2002aa} and the references therein.

However, in some situations, e.g., transformers and inductors with foil windings, the conventional conductor models become cumbersome, e.g. since thin sheets must be resolved on the computational domain.
Here, foil conductor models \cite{De-Gersem_2001aa,Dular_2002aa,Valdivieso_2021aa} have been proposed.
Figure~\ref{fig:foilwinding3D} illustrates the foil winding geometry.

Sometimes field equations and conductor models are embedded into circuit simulations, for example if a power converter controller is simulated along with the device of interest \cite{Maciejewski_2017ab}.
The numerical treatment of such coupled problems has been investigated in Refs.~\cite{Benderskaya_2005aa,De-Gersem_2000aa,Escarela-Perez_2011aa,Kanerva_2001aa,Schops_2010aa}.
Two types of approaches can be distinguished: monolithic methods, where all equations are solved together in one large system, and co-simulation approaches, where the equations are solved separately with limited (possibly iterative) exchange of information. 
The numerical behavior of the resulting field/circuit coupled system has been analysed in Refs.~\cite{Bartel_2011aa,Bartel_2018aa,Cortes-Garcia_2020ab,Schops_2010ab}.
In conclusion: low-frequency magnetoquasistatic field models based on solid and stranded conductors shall be driven by voltages rather than currents to avoid numerical difficulties.
This is consistent with their lumped equivalent model like (nonlinear) inductors, and it is independent of their potential formulation, i.e., $\vec{A}-\phi$ or $\vec{T}-\Omega$.

This paper extends the analysis of field/circuit coupled systems to the case of foil conductor models.
We observe an issue with the conventional \ac{fe} approximation of the foil winding turn-by-turn conductance matrix and propose a new variant that restores the consistency with the inductance-like behavior of solid and stranded conductors.
\begin{figure}
    \centering
    \includegraphics[trim={4em 0em 0em 0em},clip]{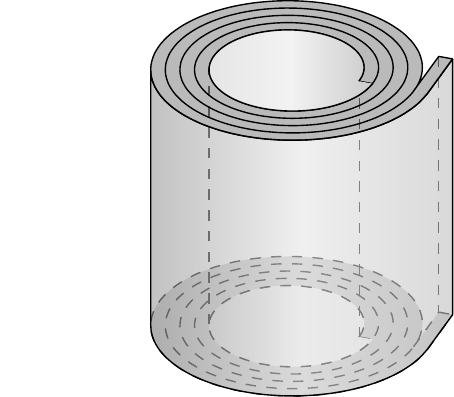}
    \caption{Illustration of the geometry of a foil winding.}
    \label{fig:foilwinding3D}
\end{figure}
The structure is as follows: Section \ref{sec:foilmodel} introduces the foil conductor model and describes its proposed modification.
Section \ref{sec:incircuits} examines how the model behaves as a part of an external circuit.
In Section \ref{sec:numresults}, numerical results are presented to verify the findings of the previous sections.

\section{Foil Conductor Model}\label{sec:foilmodel}
Foil conductor models have been originally proposed in Refs.~\cite{De-Gersem_2001aa} and \cite{Dular_2002aa}. However, the following derivation follows mainly Ref.~\cite{Valdivieso_2021aa}. It starts with the magnetoquasistatic approximation of Maxwell's equations on a domain $\Omega$, using the $\vec{A}-\phi$-formulation with the magnetic vector potential $\vec{A}$ and the electric scalar potential $\phi$. Consequently, the electric field can be written as 
\begin{equation}
	\Ev= -\partial_t\Av - \Grad\phi\,.
\end{equation}
We choose the scalar potential such that
\begin{equation}
	-\Grad\phi = \volfun\vdf =: \Ev_{\mathrm{s}}\,,
\end{equation}
with the voltage function $\volfun$ and a distribution function $\vdf$ defined in the foil winding domain $\Omegafw:=\mathrm{supp}(\vdf)$, where $\vdf$ corresponds to the winding function for solid conductors from Ref.~\cite{Schops_2013aa}. 
We assume that its direction is perpendicular to a constant rectangular cross-section 
$$S=\left[-\frac{\ellalpha}{2},\frac{\ellalpha}{2}\right] \times \left[-\frac{\ellbeta}{2},\frac{\ellbeta}{2}\right],$$
see Fig.~\ref{fig:crosssection}. To further simplify the notation, we introduce a local coordinate system
$\alpha\in\Lalpha$, $\beta\in\Lbeta$, $\gamma\in\Lgamma$ in the foil winding domain $\Omegafw$ and use the (invertible) mapping $\mathbf{f}:(\alpha,\beta,\gamma)\mapsto(x,y,z)$ to transform local to global coordinates. 
We assume $\mathbf{f}$ to be linear in both $\alpha$ and $\beta$ and to map the rectangle $S$ to a rectangle in $\Omega$. 
Note that not all of these assumptions are mathematically necessary but they cover all practical relevant cases.
Finally, in the third dimension, the distribution function fulfills the property
\begin{equation}
	\int_{\mathbf{f}(\alpha,\beta,\Lgamma)} \vdf \cdot \dsv = 1\,,\quad\forall\alpha\in\Lalpha\,,\;\forall\beta\in\Lbeta\,.
\end{equation}

Let us denote the number of turns with $\turns$. Then the domain of the $k$-th turn is described by $\Omega_k\subset\Omegafw$ such that $\Omegafw = \cup_{k=1}^{\turns} \Omega_k$. 
On each turn we define a restricted distribution function as
\begin{equation}
	\vdf_k = 
	\begin{cases}
		\vdf & \text{in }\Omega_k\,,\\
		0 & \text{else}\,.
	\end{cases}
\end{equation}

\begin{figure}
	\centering
	\subfloat[]{\label{fig:crosssection}\def\width{3.5}
\def\height{4.5}
\def\depth{2}
\begin{tikzpicture}[z={(1.7,1.3)},>=Latex]
	\pgfmathparse{0.25*\width}
	\pgfmathsetmacro\xmin{\pgfmathresult}
	\pgfmathparse{0.75*\width}
	\pgfmathsetmacro\xmax{\pgfmathresult}
	\pgfmathparse{0.25*\height}
	\pgfmathsetmacro\ymin{\pgfmathresult}
	\pgfmathparse{0.75*\height}
	\pgfmathsetmacro\ymax{\pgfmathresult}
	\pgfmathparse{0.5*\width}
	\pgfmathsetmacro\xmid{\pgfmathresult}
	\pgfmathparse{0.5*\height}
	\pgfmathsetmacro\ymid{\pgfmathresult}
	\def\nlines{20}
	
	\fill[white,draw=black] (0,0) rectangle (\width,\height);
	\foreach \n in {0,1,...,\nlines}{
		\draw[opacity=0.3] ($(\xmin,\ymin) + \n/\nlines*(\xmax,\ymin) - \n/\nlines*(\xmin,\ymin)$) -- ($(\xmin,\ymax) + \n/\nlines*(\xmax,\ymax) - \n/\nlines*(\xmin,\ymax)$);
	}
	\fill[draw=black, opacity=0.3,gray] (\xmin,\ymin) rectangle (\xmax,\ymax);
	\begin{scope}[shift={(\width/2,\height/2,0)}]
		\draw[->] (0,0,0) -- (0.6,0,0) node[below] {$\alpha$};
		\draw[->] (0,0,0) -- (0,0.6,0) node[left] {$\beta$};
		\node (O) at (0,0,0) {$\odot$};
		\node[below left] at (O) {$\gamma$};
	\end{scope}
	\draw[|<->|,shift={(0,-0.1)}] (\xmin,\ymin) -- (\xmax,\ymin) node[pos=0.5,below] {$\ellalpha$};
	\draw[|<->|,shift={(-0.1,0)}] (\xmin,\ymin) -- (\xmin,\ymax) node[pos=0.5,left] {$\ellbeta$};
	\node[below left] at (\xmax,\ymax) {$\Omegafw$};
	\node[below left] at (\width-0.3,\height-0.3) {$\Omega$};
\end{tikzpicture}}
	\hspace{2cm}
	\subfloat[]{\label{fig:singlefoil}\def\insulation{0.5}
\def\tmpfoilwidth{1.5}
\def\height{3}
\def\depth{2}
\hspace{-0.4cm}
\begin{tikzpicture}[>={Latex[width=3pt]}]
	\pgfmathparse{2*\insulation + \tmpfoilwidth}
	\pgfmathsetmacro\xmax{\pgfmathresult}
	\pgfmathparse{\xmax - \insulation}
	\pgfmathsetmacro\xr{\pgfmathresult}
	
	\draw (0,0) rectangle (\xmax,\height);
	\draw[fill=gray, opacity=0.3] (\insulation,0) rectangle (\xr,\height);
	\draw[|<->|] (0,-0.1) -- +(\insulation,0) node[pos=0.5,below] {$\frac{\widthinsu}{2}$};
	\draw[|<->|] (\insulation,-0.1) -- +(\tmpfoilwidth,0) node[pos=0.5,below] {$\widthconductor$};
	\draw[|<->|] (\xr,-0.1) -- +(\insulation,0) node[pos=0.5,below] {$\frac{\widthinsu}{2}$};
	\draw[|<->|] (0,3.1) -- +(\xmax,0) node[pos=0.5,above] {$\foilwidth$};
\end{tikzpicture}}
	\caption{Geometry of a foil winding: \protect\subref{fig:crosssection} Cross-section of the foil winding domain $\Omegafw$ inside the computational domain $\Omega$ with the local coordinate system $(\alpha,\beta,\gamma)$. The unit vectors $\ealpha$, $\ebeta$ and $\egamma$ point perpendicular to the foils, to the tips of the foils and in the direction of symmetry, respectively. \protect\subref{fig:singlefoil} Cross-section of a single foil. The conducting material is in gray and the insulation material in white.}
	\label{fig:foil_winding_cross_section}
\end{figure}
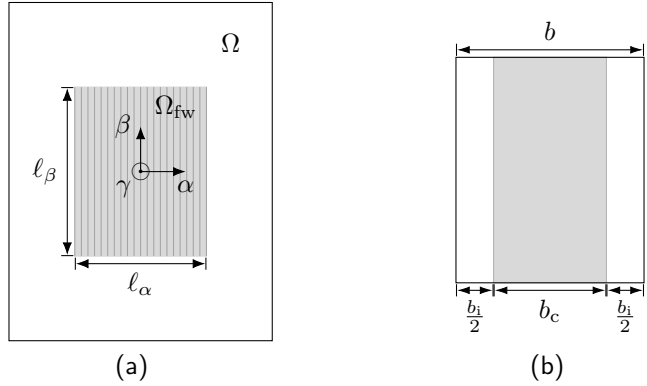

Figure~\ref{fig:singlefoil} shows the cross-section of a single foil. It consists of a conducting material of width $\widthconductor$ and an insulation material of width $\widthinsu$. The total width of one foil is $\foilwidth$. The fill factor is defined $\fillfactor := \frac{\widthconductor}{\foilwidth}$. 
We assume, due to insulation, that the electric field perpendicular to the foils, i.e., in $\alpha$-direction of the local coordinate system, does not generate a current density.

To ensure that the total current flowing through every foil is equal to a lumped current $\cur$, it must hold
\begin{equation}\label{eq:basiccurrenteq}
	\cur = \int_{\Omega} \Jv\cdot\vdf_k\;\dV
\end{equation}
for all turns $k$, with $\Jv$ being the current density. 
We assume that the foils are thin with respect to the skin depth, i.e., $\widthconductor \ll \delta = \sqrt{\frac{2}{\omega\mu\sigma}}$, with the angular frequency $\omega$, the permeability $\mu$ and the conductivity $\sigma$. 
With that, the current density can be assumed constant over the thickness of a foil. 
Since the conductivity in the insulation material $\sigmainsu$ is zero, the current density is only present in the conducting material. Consequently, \eqref{eq:basiccurrenteq} can be approximated using the conductivity of the conducting material $\sigmacond$ with 
\begin{equation}
	\cur \approx \widthconductor \int_{\Gamma(\alpha_k)} \Jv \cdot \vdf \;\dS = \widthconductor \int_{\Gamma(\alpha_k)} \sigmacond\Ev \cdot \vdf \;\dS\,.
\end{equation}
Herein, $\Gamma(\alpha)$ is the surface through the foil winding domain at position $\alpha$, i.e.
\begin{equation}
	\Gamma(\alpha) := \left\{\mathbf{f}(\alpha,\beta,\gamma): \beta\in\Lbeta,\;\gamma\in\Lgamma\right\}\subset\Omegafw\,,
\end{equation}
and $\alpha_k$ is the mid point coordinate of the $k$-th turn. 
Lastly, we insert the expression for $\Ev$ and write
\begin{equation}\label{eq:current_eq_uk}
	\cur \approx b \int_{\Gamma(\alpha_k)} \fillfactor\sigmacond \left(-\partial_t\Av + \volfun\vdf\right)\cdot\vdf \;\dS \,.
\end{equation}

In the homogenized model, the single foils are not resolved anymore. The foil winding domain has constant anisotropic material parameters of a homogenized conductivity and reluctivity that are determined with a mixing rule \cite{Sihvola_1999aa}. Therefore, in the foil winding domain, we write
\begin{subequations}
	\begin{align}
		\sigma_\alpha &= 0\,, &\sigma_\beta&=\sigma_\gamma = \fillfactor \sigmacond + (1-\fillfactor)\sigmainsu = \fillfactor \sigmacond\,,\\
		\nu_\alpha &= \fillfactor \nucond + (1-\fillfactor)\nuinsu\,, & \nu_\beta&=\nu_\gamma = \left(\frac{\fillfactor}{\nucond} + \frac{(1-\fillfactor)}{\nuinsu}\right)^{-1}\,.
	\end{align}
\end{subequations}

The current condition \eqref{eq:current_eq_uk} has to hold for all $\alpha_k$. 
For $\turns \rightarrow \infty$, we impose \eqref{eq:current_eq_uk} for all $\alpha\in\Lalpha$. 
We end up with the final, homogenized system of equations
\begin{subequations}\label{eq:strong_form}
	\begin{align}
		\Curl\left(\nu\Curl\Av\right) + \sigma\partial_t\Av - \sigma\volfun\vdf &= 0\,, && \text{in }\Omega \label{eq:strong_form_first}\\
		\int_{\Gamma(\alpha)}\sigma\left(-\partial_t\Av+ \volfun\vdf\right)\cdot\vdf\;\dS &= \frac{\cur}{\foilwidth}\,, && \text{in } \Lalpha \label{eq:strong_form_second}
	\end{align}
\end{subequations}
with adequate initial values and boundary conditions on $\partial\Omega$. We choose, for simplicity of notation, a homogenous Dirichlet condition, i.e., $\Av\times\vec{n}=0$ on $\partial\Omega$ where $\vec{n}$ is the outward pointing normal vector.

\subsection{Discretized model}
In the following, \eqref{eq:strong_form} is discretized using the Galerkin procedure \cite{Alonso-Rodriguez_2010aa,Monk_2003aa}. 
The vector potential $\Av$ is discretized with a finite set of standard \ac{fe} edge functions $\w_j\in\mathbf{H}_0(\mathrm{curl},\Omega)$. We assume that the distribution function $\vdf$ can be expressed in terms of the same $\w_j$ or is approximated by L2 projection. Finally, the voltage function $\volfun$ is discretized with another set of basis functions $\hat\poly_l\in H^1(\Lalpha)$ which are defined in the local coordinate system but can be transformed with
\begin{equation}
	\poly_l(x,y,z)=
	\begin{cases}
		\hat\poly_l\circ f_\alpha^{-1}(x,y,z) & \text{if }(x,y,z)\in\Omegafw\,,\\
		0 & \text{otherwise}\,,
	\end{cases}
\end{equation}
where $f_\alpha^{-1}(x,y,z)$ denotes the $\alpha$-component of the inverse of $\mathbf{f}$. This allows us to expand the fields in terms of the basis functions defined on $\Omega$ as 
\begin{align}
	\Av=\sum_{j=1}^{N_w} a_j\w_j,
	\;\;
	\vdf=\sum_{j=1}^{N_w} x_j\w_j
	\;\;\text{and}\;\;
	\volfun=\sum_{l=1}^{N_\poly} u_l \poly_l\,,
	\label{eq:discretizations}
\end{align}
where we do not distinguish between the exact fields and their \ac{fe} approximations.

Testing \eqref{eq:strong_form_first} with edge functions $\w_i$ and integration over the computational domain $\Omega$ yield the standard \ac{fe} matrices $\Kbnu, \Mbsigma\in\real^{N_w\times N_w}$ and the vector $\Xbfoil\in\real^{N_w\times N_\poly}$. Their entries are
\begin{align}
	\left[ \Kbnu \right]_{i,j} &= \int_\Omega \nu \Curl\w_j \cdot\Curl\w_i \;\dV\,,\label{eq:stiffness}\\
	\left[ \Mbsigma \right]_{i,j} &= \int_\Omega \sigma \w_j \cdot \w_i \;\dV\,,\label{eq:mass}\\
	\left[ \Xbfoil \right]_{i,l} &= \int_\Omega \sigma \poly_l \vdf \cdot \w_i \;\dV\,.
\intertext{Following the naming convention from mechanics, we call $\Kbnu$ the stiffness matrix and $\Mbsigma$ the mass matrix. Since the distribution function $\vdf$ can be expressed in terms of the \ac{fe} edge functions $\w_j$, see \eqref{eq:discretizations}, it holds}
	\left[ \Xbfoil \right]_{i,l} &= \sum_{j} x_j \int_\Omega \sigma \poly_l \w_j \cdot \w_i \;\dV\\
	 &= \left[ \MbsigmaIndex{l} \Xbw \right]_{i}\label{eq:Xfoil}\,,
\end{align}
with the coefficients of the distribution function $[\Xbw]_i=x_i$ and the (modified) mass matrix $[\MbsigmaIndex{l}]_{i,j}=\int_\Omega \sigma \poly_l \w_j \cdot \w_i \;\dV$ containing the extra basis functions.

The current condition \eqref{eq:strong_form_second} is tested with the basis functions $\poly_k$ and integrated over the one-dimensional domain $\mathbf{f}(\Lalpha,\beta,\gamma)$ of homogenization, i.e.
\begin{align}
	\int_{\Omega}\sigma\left(-\partial_t\Av+ \volfun\vdf\right)\cdot\vdf\poly_k \;\dV = \int_{\mathbf{f}(\Lalpha,\beta,\gamma)}\frac{\cur}{\foilwidth} \poly_k\;\ds \,.
\end{align}
This yields to the transpose of the already defined matrix $\Xbfoil$, the vector ${\cb\in\real^{N_\poly}}$ and the turn-by-turn conductance matrix $\Gold\in \real^{N_\poly\times N_\poly}$, whose entries are defined as
\begin{align}
	\left[ \cb \right]_k &= \frac{1}{\foilwidth}\int_{\mathbf{f}(\Lalpha,\beta,\gamma)}\poly_k\;\ds = \frac{\turns}{\ellalpha}\int_{\mathbf{f}(\Lalpha,\beta,\gamma)}\poly_k\;\ds\,,
\intertext{and}
	\left[ \Gold \right]_{k,l} &= \int_\Omega \sigma \vdf\cdot\vdf \poly_l \poly_k \;\dV \label{eq:G_old_def} \\
	&= \sum_{i,j=1}^{N_w}x_ix_j \int_\Omega \sigma \w_i\cdot \w_j \poly_l\poly_k \;\dV\\
	&= \Xbw^\top \MbsigmaIndex{k,l} \Xbw\,,
\end{align}
expressed in terms of the (modified) mass matrices $\left[\MbsigmaIndex{k,l}\right]=\int_\Omega \sigma \w_i\cdot \w_j \poly_l\poly_k \;\dV$ involving both $\poly_k$ and $\poly_l$. 

The voltage drop $\vol$ over the foil winding domain is the sum of the voltage drops over each foil, i.e.
\begin{equation}
	\vol = \sum_{k=1}^{\turns} \vol_k\,.
\end{equation}
With the voltage function, we can approximate the voltage drop over foil $k$ as $\vol_k = \volfun(\mathbf{f}(\alpha_k,\cdot,\cdot))$. From there, it follows
\begin{align}
	\vol = \sum_k \volfun(\mathbf{f}(\alpha_k,\cdot,\cdot)) &\approx \sum_k \frac{1}{\delta} \int_{\alpha_k-\frac{\delta}{2}}^{\alpha_k + \frac{\delta}{2}} \volfun(\mathbf{f}(\alpha,\cdot,\cdot)) \;\diff \alpha \\
	& = \frac{1}{\delta} \int_{\Lalpha} \volfun(\mathbf{f}(\alpha,\cdot,\cdot)) \;\diff \alpha \\
	& = \frac{1}{\foilwidth} \int_{\mathbf{f}(\Lalpha,\beta,\gamma)} \volfun \;\ds.
\end{align}
Consequently, the voltage can be expressed with $\vol = \cbT\ub$.

Finally, the discretized foil conductor model can be expressed in terms of the matrices above as 
\begin{subequations}\label{eq:foilm}
    \begin{align}
        \Mbsigma\DIFF\ab + \Kbnu \ab - \Xbfoil \ub &= \zero \label{eq:foilma} \\
        - \XbfoilT \DIFF \ab + \Gold \ub - \cb \cur &= \zero \label{eq:foilmb} \\
        - \cbT \ub + \vol &= 0 \label{eq:foilmc} 
    \end{align}
\end{subequations}
with appropriate initial values at some time $t_0$. This three-dimensional model is a natural generalization to special cases found in literature, for example the two-dimensional model in  Ref.~\cite{De-Gersem_2001aa}, and it coincides with the model of Dular et al. in Ref.~\cite{Dular_2002aa}.

\subsection{Alternative discretization of the turn-by-turn conductance matrix}
We propose an alternative discretization of the turn-by-turn conductance matrix \eqref{eq:G_old_def}. We start by introducing the source electric field corresponding to voltage $\vol_l$ as an explicit variable 
\begin{align}
	\label{eq:Esrc}
	\Ev_{\mathrm{s},l}=p_l\vdf=\sum_{j=1}^{N_w} e_{l,j}\w_j
\end{align}
and use this in \eqref{eq:G_old_def} such that 
\begin{align}
	\left[ \Gnew \right]_{k,l} &= \int_{\Omega} \sigma\poly_k\vdf\cdot\Ev_{\mathrm{s},l}\;\dV\\
	&= \sum_{i,j} x_i e_{l,j} \int_{\Omega} \sigma\poly_k\w_i\cdot \w_j\;\dV \\
	&= \Xbw^\top \MbsigmaIndex{k} \mathbf{e}_{l}
\end{align}
and for all $j=1,\ldots,N_w$
\begin{align}
	\int_{\Omega} \sigma\w_j\cdot\Ev_{\mathrm{s},l}\;\dV &= \int_\Omega \sigma\w_j\cdot \left(\poly_l\vdf\right)\;\dV\\
	\sum_{i} e_{l,i} \int_{\Omega} \sigma\w_j\cdot\w_i\;\dV &= \sum_{i} x_i \int_\Omega \sigma \poly_l \w_j\cdot \w_i\;\dV\\
	\Mbsigma \mathbf{e}_{l} &= \MbsigmaIndex{l} \Xbw\,.
	\label{eq:MeMX}
\end{align}
Plugging $\mathbf{e}_{l}$ into the above equation yields another variant of the turn-by-turn conductance matrix, i.e.,
\begin{align}
	\label{eq:G_new_def}
	\left[ \Gnew \right]_{k,l} &= \Xbw^\top \MbsigmaIndex{k} \Mbsigma^{+}\MbsigmaIndex{l} \Xbw\,,
\end{align}
where $\Mbsigma^{+}$ denotes the (Moore-Penrose) pseudo-inverse of $\Mbsigma$. This mass matrix is singular because it only acts on degrees of freedom that are located in conductive domains.
However, this is sufficient since the source electric fields are located exactly there.

Note that both definitions, i.e., \eqref{eq:G_old_def} and \eqref{eq:G_new_def}, lead in general to different matrices for finitely many basis functions. However, both are consistent with the \ac{fe} discretization and converge for $N_w, N_\poly\to\infty$ to the same solution.

\subsection{Compatibility with solid conductor model}
In  Ref.~\cite{Valdivieso_2021aa}, it is stated that the foil conductor model behaves as a solid conductor if a constant voltage function is chosen, i.e., $\poly_1=1$ is the only basis function ($N_\poly=1$). In this special case the definitions \eqref{eq:stiffness} and \eqref{eq:mass} do not change but \eqref{eq:Xfoil} naturally simplifies to
\begin{align}
	\Xbsol &= \Mbsigma \Xbw\,.\label{eq:Xsol}
\end{align}
Both the original \eqref{eq:G_old_def} and the new discretization \eqref{eq:G_new_def} of the turn-by-turn conductance matrix reduce to
\begin{align}
	\Gbsol &= \Xbw^\top \Mbsigma \Xbw\\
		&= \Xbw^\top \Mbsigma \Mbsigma^+ \Mbsigma \Xbw\,.
\end{align}
From this, it follows that the foil conductor model is equivalent to the classic solid conductor model \cite{Schops_2013aa} for both variants of the conductance matrices. It reads
\begin{subequations}\label{eq:solm}
    \begin{align}
        \Mbsigma \DIFF \ab + \Kbnu \ab - \Xbsol \vol &= \zero \label{eq:solm_a} \\
        - \XbsolT \DIFF \ab + \Gbsol \vol - \cur &= 0\,. \label{eq:solm_b}
    \end{align}
\end{subequations}
since the third equation \eqref{eq:foilmc} becomes trivial, i.e., $u_1 = \vol$, and can be plugged into the second \eqref{eq:foilmb}.

\section{Circuit Compatibility}\label{sec:incircuits}
The conductor models, i.e., foil \eqref{eq:foilm} and solid \eqref{eq:solm}, provide the necessary coupling conditions for circuits, i.e., they allow to excite the electromagnetic fields in terms of currents and voltages. Since the mid 70s, the most common formalism implemented in circuit simulators is the modified nodal analysis (MNA) \cite{Ho_1975aa}. Its main advantages are sparse system matrices that are easy to assemble and its robustness with respect to topological changes, e.g., switching. While the MNA is formulated in less unknowns than for example sparse tableau analysis \cite{Hachtel_1971aa}, it does not aim for a minimal set of degrees of freedom. One consequence of this redundancy is that the resulting system consists of differential and algebraic equations (DAEs) rather than ordinary differential equations (ODEs). Common issues related to the numerical treatment of DAEs are the difficulty of finding consistent initial conditions and the sensitivity towards perturbations \cite{Hairer_1996aa}.

\subsection{Sensitivity with respect to perturbations}
To illustrate these numerical difficulties, we consider a simple inductor model in flux-oriented form, i.e.,
\begin{subequations}\begin{align}
	\DIFF\psi(t)&=\vol(t)\label{eq:induct_voltage}\\
	\psi(t)&=L\cur(t)\label{eq:induct_psi}
\end{align}\end{subequations}
for $t\in(t_0, t_\mathrm{end}]$. The equations describe a relation between currents and voltages. Let us investigate the voltage- and current-driven-case separately, see Fig.~\ref{fig:inductor_voltage_current}.

\begin{figure}
	\centering
	\begin{circuitikz}
		\draw[font=\customfont\selectfont, line width=\linecustomLcirc] (0,0)
		to[V,v=$v(t)$] (0,2.5)
		to[short] (2.5,2.5)
		to[L=$L$] (2.5,0)
		to[short] (0,0)
		to (0,0.2) node[ground]{};
		\draw[font=\customfont\selectfont, line width=\linecustomLcirc] (5,0)
		to[I,i=$i(t)$] (5,2.5)
		to[short] (7.5,2.5)
		to[L=$L$] (7.5,0)
		to[short] (5,0)
		to (5,0.2) node[ground]{};
	\end{circuitikz}
	\caption{Voltage- (left) and current-driven (right) inductor.}
	\label{fig:inductor_voltage_current}
\end{figure}
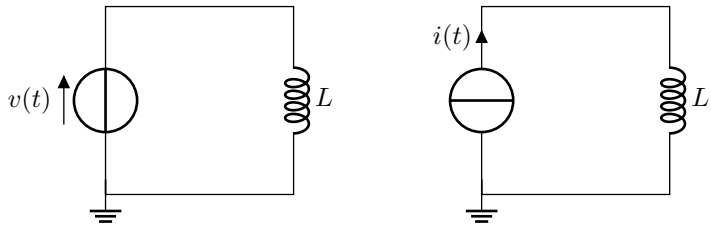

\subsubsection{Voltage-driven case}\label{subsub_vfed}
For a given voltage $\vol$ the problem is described in terms of a differential equation defining the flux $\psi$ and an algebraic equation for the current $\cur$. After time-differentiation of \eqref{eq:induct_psi} one obtains a purely differential problem. The solution is
\begin{align}\label{eq:induct_voltage_driven}
	\cur(t) = \frac{1}{L}\left(
		\psi_0 + \int_{t_0}^t \vol(s) \mathrm{d}s
	\right)
\end{align}
with an arbitrary flux $\psi(t_0)=\psi_0$ as initial condition. 
\subsubsection{Current-driven case}\label{subsub_ifed}
If the current $\cur$ is given, then $\psi$ is fixed by an algebraic relation and the solution is
\begin{align}\label{eq:induct_current_driven}
	\vol(t)=L\DIFF\cur(t).
\end{align}
This is an algebraic equation that does not allow to freely specify an initial condition or more precisely: only $\vol(t_0)=\vol_0:=L\DIFF\cur(t)\vert_{t_0}$ is consistent. Note that this equation
is obtained with one time-differentiation of \eqref{eq:induct_psi}.
Only after a second differentiation an explicit ODE for $\DIFF\vol$ can be obtained.

\subsection{Differential index}\label{sec:diff_index}
The sensitivity of the solution with respect to perturbations is very different in the systems of Section~\ref{subsub_vfed} and \ref{subsub_ifed}. Let us consider the following particular current excitation for the current-driven case
\begin{align}
	\cur(t) &= I_1\sin(2 \pi f_1 t) + I_2 \sin(2 \pi f_2 t)
\end{align}
where the second amplitude shall be almost negligible $I_2\ll I_1$ but at very high frequency $f_2\gg I_1/I_2 f_1$. Due to the time-derivative in \eqref{eq:induct_current_driven} the solution in the current-driven case will be seriously perturbed, i.e., the second term with amplitude $2\pi f_2 I_2$ becomes dominant. On the other hand, a similarly perturbed voltage source would not significantly affect the current of the voltage-driven case \eqref{eq:induct_voltage_driven} since there, in the solution, the sum of the sine waves appears integrated in time instead of differentiated.

This motivates the introduction of the number of time-differentiations as a measure of sensitivity and classification of DAEs. In this context, the notion of index of a DAE is proposed.
Several definitions exist. We use the following:
\begin{definition} \label{def:dindex} (Differential index {\normalfont \cite{Brenan_1995aa}})
	A solvable and sufficiently smooth system of DAEs $\mathbf{f}(\xb', \xb, t) = \mathbf{0}$ is said to have differential index $m$, if $m$ is the minimum number of differentiations
	$$
	\mathbf{f}(\mathbf{x}',\mathbf{x},t)=\mathbf{0},\quad
	\frac{\mathrm{d}}{\mathrm{d}t}\mathbf{f}(\mathbf{x}',\mathbf{x},t)=\mathbf{0},\quad
	\ldots,\quad
	\frac{\mathrm{d}^m}{\mathrm{d}t^m}\mathbf{f}(\mathbf{x}',\mathbf{x},t)=\mathbf{0}\,,
$$ 
	that allow the extraction of an explicit ordinary differential system with only algebraic manipulations.
\end{definition}

For circuits modeled with MNA containing classical lumped circuit elements, the differential index is known and depends on the topology of the circuit \cite{Estevez-Schwarz_2000aa}. The index is 2 at maximum. The following theorem states the condition for this case, however, without formulating all necessary assumptions for which the reader is referred to the original paper.

\begin{theorem} \label{thm:circ_index}
    (Differential index of circuits {\normalfont \cite{Estevez-Schwarz_2000aa}})
    Circuits modeled with MNA lead to systems of DAEs with differential index 2 if, and only if, at least one of the following conditions is fulfilled.
    The circuit contains
    \begin{enumerate}[label=(\roman*)]
        \item cutsets of branches which contain only inductors and current sources. ("$LI$-cutsets").
        \item loops of branches which contain only capacitors and voltage sources ("$CV$-loops") with at least one voltage source.
    \end{enumerate}
	Otherwise, the circuit has differential index 1.
\end{theorem}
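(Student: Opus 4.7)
The plan is to write the MNA equations in a block form that separates branches by element type, and then to carry out a projector-based index analysis in the style of Est\'evez-Schwarz and Tischendorf. Let $A_C, A_R, A_L, A_V, A_I$ denote the reduced incidence matrices of capacitive, resistive, inductive, voltage-source, and current-source branches, and let the unknowns be the vector $\mathbf{e}$ of non-ground node potentials together with the branch currents $\mathbf{j}_L$ through inductors and $\mathbf{j}_V$ through voltage sources. Assembling KCL at every non-ground node and appending the constitutive laws of capacitors, inductors, and voltage sources gives a semi-explicit DAE whose leading Jacobian (with respect to $\mathbf{x}'$) has nullspace determined entirely by $\ker A_C^\top$ in the node-potential block together with all rows corresponding to voltage-source branch relations.

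First I would isolate the hidden algebraic constraints. Projecting the KCL equations with $Q_C$, a projector onto $\ker A_C^\top$, strips away the capacitive differential part and produces constraints involving only $A_R, A_L, A_V, A_I$ and the voltage-source law. One differentiation of these constraints, together with the inductor and capacitor constitutive laws, generically allows one to solve for the missing derivatives of the algebraic variables, yielding differential index~$1$. Index~$2$ arises precisely when this single differentiation fails to decouple, i.e., when the relevant projected matrix pencil is rank-deficient in a direction that forces a second differentiation.

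Next I would translate the two topological conditions into rank statements. Using the standard correspondences that loops of a subgraph are $\ker$ of its transposed reduced incidence matrix and cutsets are the range, a $CV$-loop containing at least one voltage source is equivalent to $\ker(Q_C^\top A_V)\neq\{0\}$, expressing a KVL-dependence among capacitor and voltage-source branch voltages that cannot be resolved algebraically; symmetrically an $LI$-cutset is equivalent to a nontrivial kernel of the analogous projected incidence product built from $A_L$ and $A_I$ after removing the directions fixed by KCL at capacitive and voltage-source nodes. In each case one shows that exactly one additional differentiation is required to recover the missing derivative: for the $CV$-loop the derivative of $\mathbf{j}_V$ (or of the dependent capacitor voltages), for the $LI$-cutset the derivative of a node potential driving a purely $LI$ sub-network.

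The main obstacle is the careful projector bookkeeping: one must show that the graph-theoretic existence of $LI$-cutsets or $CV$-loops-with-voltage-source is equivalent to these rank deficiencies in the projected pencils, and not merely sufficient. Once this dictionary is established, the "if" direction follows by explicitly exhibiting a solution component whose determination requires a second time-differentiation, while the "only if" direction is proved by constructing an explicit decoupling of the DAE into a purely differential part in $(\mathbf{e}_C,\mathbf{j}_L)$ and an algebraic part that is directly invertible as soon as both $Q_C^\top A_V$ has full column rank and the corresponding $LI$-cutset matrix is trivial. Combining these two directions and appealing to Definition~\ref{def:dindex} then yields the stated dichotomy between index~$1$ and index~$2$.
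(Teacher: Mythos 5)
The paper does not prove Theorem~\ref{thm:circ_index}: it is quoted from Est\'evez-Schwarz and Tischendorf \cite{Estevez-Schwarz_2000aa}, and the authors explicitly defer both the full set of hypotheses and the proof to that reference. There is therefore no in-paper argument to compare against; your proposal has to stand on its own as a reconstruction of the original proof, and judged that way it correctly identifies the strategy but is not yet a proof. The projector-based decoupling, the role of $Q_C$ as a projector onto $\ker A_C^\top$, and the translation of the $CV$-loop condition into $\ker(Q_C^\top A_V)\neq\{0\}$ are the right ingredients. What is missing is precisely the content of the theorem.

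Concretely, three gaps. First, you assert but do not prove the dictionary between topology and rank: the equivalence ``$CV$-loop with a voltage source $\Leftrightarrow \ker(Q_C^\top A_V)\neq\{0\}$'' and the corresponding statement for $LI$-cutsets, which in the original reference is $\ker\bigl(A_C\;A_R\;A_V\bigr)^{\!\top}\neq\{0\}$, i.e.\ a rank condition on the incidence matrix of \emph{all non-$LI$ branches}; your formulation (``a projected incidence product built from $A_L$ and $A_I$'') is vague and, as written, not obviously the right object. These graph-theoretic lemmas are the technical core of the result and cannot be dismissed as ``careful projector bookkeeping.'' Second, the ``only if'' direction needs the hypotheses that the surrounding text admits to omitting: passivity (positive definiteness of the capacitance, inductance and conductance matrices) is what makes matrices such as $A_C C A_C^\top + Q_C^\top Q_C$ invertible, and the absence of pure voltage-source loops and pure current-source cutsets is what makes the MNA system solvable at all; your decoupling uses both silently. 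Third, the dichotomy as stated (``otherwise index 1'') is not what your argument would deliver: without further assumptions the complementary case can also have index $0$ (e.g.\ when $A_C$ has full row rank and there are no voltage sources), so you would at best prove ``index at most $1$.'' None of these obstacles is fatal---they are all resolved in \cite{Estevez-Schwarz_2000aa}---but your proposal defers exactly the steps where the theorem lives.
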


The theorem is immediately applicable to our two simple inductor examples. The first case, Section~\ref{subsub_vfed}, is a series connection of an inductor and a voltage source which is at most index 1 and harmless. The second example, Section~\ref{subsub_ifed}, forms a $LI$-cutset and may lead to numerical problems, e.g., high sensitivity towards noise as observed. 

\subsection{Classifications}

Generalized circuit elements have been introduced  in  Ref.~\cite{Cortes-Garcia_2020ag} to classify field models as refined elements and to include them in the index result of Theorem~\ref{thm:circ_index}.
Resistance-like, inductance-like and capacitance-like elements are defined.
Classical resistances, capacitors and inductors, as well as charge formulated capacitances and flux formulated inductances have been shown to correspond to their generalized circuit elements.
The type of generalized element, that the field model is, gives an intuition how the model will behave in an external circuit.
Given the intuitively inductive nature of the foil conductor model, we focus on the introduction of the inductance-like element. The resistance-like element is briefly remarked.

In the following, a simplified version of the (strongly) inductance-like element definition \cite{Cortes-Garcia_2020ag} will be used.
The definition is more restrictive but still sufficient for the analysis of linear systems such as the foil conductor model discussed here.

\begin{definition} \label{def:ilike}
    (Inductance-like element)
    A circuit element is called inductance-like, if with only one time differentiation its constitutive equations can be transformed into the form
    \begin{subequations} \label{eq:ilike}
        \begin{align}
            \DIFF \xb &= \mathbf{f}_{\boldsymbol{x}} (\xb, \cur, \vol, t) \label{eq:ilike_dxdt} \\
            \DIFF \cur &=  g_{\mathrm{L}} (\xb, \cur, \vol, t) \label{eq:ilike_didt}
        \end{align}
    \end{subequations}
    where $\xb$ are `internal' variables that are not explicitly coupled to the circuit (e.g., vector potentials). 
    Additionally,
	\begin{align}
		\partial_{\vol} g_{\mathrm{L}} (\xb, \cur, \vol, t) &\coloneqq L \label{eq:ilike_condition}	
	\end{align}
	is required to be positive (definite).
\end{definition}

In addition to inductance-like,  Ref.~\cite{Cortes-Garcia_2020ag} defines resistance-like and capacitance-like elements.
Roughly speaking, a (simplified) resistance-like element is defined similarly to
the inductance-like element in Definition~\ref{def:ilike}, with the key difference being that the implicit relation between the current
$\cur$ and the voltage $\vol$ is
\begin{equation}
	\DIFF \cur =  g_{\mathrm{R}} (\DIFF \vol, \xb, \cur, \vol, t) ,\label{eq:rlike_didt}
\end{equation}
where $\partial_{\vol'}g_{\mathrm{R}} (\vol', \xb, \cur, \vol, t)\coloneqq G_{\mathrm{R}}$ is positive (definite). For a formal definition of resistance-like as well as capacitance-like 
elements we refer to  Ref.~\cite{Cortes-Garcia_2020ag}.

The conclusions drawn in Section~\ref{sec:diff_index} for the differential index of the circuits containing only an inductance and a source as well as Theorem~\ref{thm:circ_index} remain unchanged if
the elements are replaced with their generalized definitions.

\subsection{Inductance-like behavior}
In this section, the previously introduced mathematical concepts are utilized to analyze the foil conductor model which uses the proposed turn-by-turn conductance matrix \eqref{eq:G_new_def}.
We seek to prove that the model is an inductance-like element.

\begin{assumption}\label{ass:ilike_proof}
	Gauged field formulation with consistent excitation
	\begin{enumerate}[label=(\roman*)]
        \item $\Xbfoil$ has full column rank. \label{ass:ilike_proof_i}
        \item The field formulation is adequately gauged such that the matrix pencil $\tau \Mbsigma + \Kbnu$ is regular, i.e., $\mathrm{det}(\tau \Mbsigma +~\Kbnu) \neq 0$ for a $\tau \in \mathbb{R}$. \label{ass:ilike_proof_ii}
    \end{enumerate}
\end{assumption}

Property \ref{ass:ilike_proof_i} describes a consistent excitation. The condition for \ref{ass:ilike_proof_ii} is automatically fulfilled when the calculation is done in 2D. In 3D, an additional gauging condition needs to be imposed, such as, e.g., a tree-cotree gauge \cite{Albanese_1988aa}.

Using Property \ref{ass:ilike_proof_i} of Assumption \ref{ass:ilike_proof} leads to $\Gnew$ being invertible. Consequently, the system of equations \eqref{eq:foilm} can be written as 

\begin{subequations} \label{eq:stranded_form}
    \begin{align}
        \underbrace{ \left( \Mbsigma - \Xbfoil \Gnew^{-1} \XbfoilT \right) }_{:= \Mbar} \DIFF \ab  + \Kbnu \ab &= \underbrace{ \Xbfoil \Gnew^{-1} \cb }_{:= \xbar} \cur \label{eq:stranded_form_a} \\
        \underbrace{ \cbT \Gnew^{-1} \XbfoilT }_{= \xbarT} \DIFF \ab + \underbrace{\cbT \Gnew^{-1} \cb}_{:= R} \cur &= \vol \label{eq:stranded_form_b} 
    \end{align}
\end{subequations}
by solving (\ref{eq:foilmb}) with respect to $\ub$, and substituting it in (\ref{eq:foilma}) and (\ref{eq:foilmc}).
Note that \eqref{eq:stranded_form} has the same structure as the stranded conductor model, which is known to be an inductance-like element \cite{Cortes-Garcia_2020ab}.

\begin{proposition} \label{prop:ilike_proof}
    The foil conductor model according to \eqref{eq:stranded_form} using the proposed turn-by-turn conductance matrix \eqref{eq:G_new_def} is an inductance-like element.
\end{proposition}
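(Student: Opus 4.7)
The plan is to verify Definition~\ref{def:ilike} by taking the vector potential coefficients $\ab$ as the internal state $\xb$, interpreting \eqref{eq:stranded_form_a} as the ODE for $\xb$, and extracting a differential equation for $\cur$ from \eqref{eq:stranded_form_b} with the help of a single time differentiation. Since the paragraph preceding the proposition already notes that \eqref{eq:stranded_form} has the same block structure as the stranded conductor model, which was proven inductance-like in \cite{Cortes-Garcia_2020ab}, the substance of the argument is to check that the technical hypotheses of that analysis still hold in the present situation.

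First I would establish that $\Gnew$ in \eqref{eq:G_new_def} is symmetric positive definite so that the passage from \eqref{eq:foilm} to \eqref{eq:stranded_form} is rigorous. Combining \eqref{eq:Xfoil} and \eqref{eq:MeMX} one recognises $\Gnew = \XbfoilT \Mbsigma^{+} \Xbfoil$ with the columns of $\Xbfoil$ lying in the range of $\Mbsigma$; Assumption~\ref{ass:ilike_proof}\ref{ass:ilike_proof_i} then forces $\Gnew$ to be positive definite, so that in particular $R = \cbT \Gnew^{-1} \cb > 0$ plays the role of the parasitic resistance of the element.

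Next I would use Assumption~\ref{ass:ilike_proof}\ref{ass:ilike_proof_ii} to pass to the gauged subspace on which the pencil $\tau \Mbsigma + \Kbnu$ is regular, and by a projection argument transfer this regularity to the reduced pencil governing \eqref{eq:stranded_form_a}. On this subspace \eqref{eq:stranded_form_a} can be solved for $\DIFF \ab$ as a function of $\ab$ and $\cur$ alone, yielding the required $\mathbf{f}_{\boldsymbol{x}}$. Differentiating \eqref{eq:stranded_form_b} once and eliminating second derivatives of $\ab$ via the differentiated \eqref{eq:stranded_form_a} then produces $\DIFF \cur$ as an explicit function of $\ab, \cur$ and $\vol$, and a direct computation identifies $L = \partial_{\vol} g_{\mathrm{L}}$ as a Schur-complement expression built from $R$ and an $\xbar$-contraction of the reduced pencil, whose positivity follows from $R > 0$ together with the positive semi-definiteness of $\Mbsigma$ and $\Kbnu$ on the gauged space.

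The delicate step will be controlling the enlarged null space of $\Mbar$: besides the non-conducting degrees of freedom inherited from $\Mbsigma$, it annihilates the columns of the source-field matrix $\mathbf{E}$ defined implicitly in \eqref{eq:MeMX}, since $\Mbar \mathbf{E} = \Xbfoil - \Xbfoil \Gnew^{-1} \Gnew = 0$. Showing that the assumed regularity of $\tau \Mbsigma + \Kbnu$ nonetheless descends to the reduced pencil, and constructing the projector that makes \eqref{eq:stranded_form_a} uniquely solvable for $\DIFF \ab$, is the main technical obstacle; once this is done, the remainder of the proof is a direct adaptation of the stranded-conductor argument in \cite{Cortes-Garcia_2020ab}.
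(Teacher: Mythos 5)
Your overall architecture (split \eqref{eq:stranded_form_a} by a projector onto $\ker(\Mbar)$, use one time differentiation, check positivity of the incremental inductance) matches the paper's Appendix~\ref{appendixA}, and two of your observations are correct and even sharpen steps the paper leaves implicit: the positive definiteness of $\Gnew=\XbfoilT\Mbsigma^{+}\Xbfoil$ from Property~\ref{ass:ilike_proof_i} of Assumption~\ref{ass:ilike_proof} together with the columns of $\Xbfoil$ lying in $\mathrm{range}(\Mbsigma)$, and the identity $\Mbar\mathbf{E}=\Xbfoil-\Xbfoil\Gnew^{-1}\Gnew=\zero$ showing that the source-field columns enlarge $\ker(\Mbar)$ beyond $\ker(\Mbsigma)$. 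However, the middle of your argument takes a step that would fail. You propose to obtain $\DIFF\cur$ by differentiating \eqref{eq:stranded_form_b} and eliminating the second derivative of $\ab$ via the differentiated \eqref{eq:stranded_form_a}. This cannot deliver the inductance-like form \eqref{eq:ilike_didt}: differentiating \eqref{eq:stranded_form_b} turns the algebraic occurrence of $\vol$ into $\DIFF\vol$, so the resulting relation is of the resistance-like type \eqref{eq:rlike_didt}; moreover $\Mbar$ is singular, so the differentiated \eqref{eq:stranded_form_a} does not determine the second derivative of $\ab$. The paper instead keeps \eqref{eq:stranded_form_b} undifferentiated and differentiates only the algebraic constraint $\QprojT\Kbnu\ab=\QprojT\xbar\cur$ obtained by projecting \eqref{eq:stranded_form_a} onto $\ker(\Mbar)$; this yields $\Qproj\DIFF\ab$ as an affine function of $\DIFF\cur$, and substituting $\DIFF\ab=\Pproj\DIFF\ab+\Qproj\DIFF\ab$ into the undifferentiated \eqref{eq:stranded_form_b} is precisely what makes $\DIFF\cur$ appear multiplied by $L$ while $\vol$ stays undifferentiated. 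Relatedly, your claim that \eqref{eq:stranded_form_a} alone gives $\DIFF\ab$ as a function of $(\ab,\cur)$ holds only for the $\Pproj$-component; the $\Qproj$-component involves $\DIFF\cur$ and can only be closed after $g_{\mathrm{L}}$ has been extracted.

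The second gap is the positivity of $L$. In the correct derivation $L=\xbarT\Qproj\left(\QprojT\Kbnu\Qproj+\PprojT\Pproj\right)^{-1}\QprojT\xbar$, and the quantity $R=\cbT\Gnew^{-1}\cb$ does not enter it at all, so positivity cannot follow from $R>0$ plus semi-definiteness as you assert. The crux is showing $\QprojT\xbar\neq\zero$, i.e., that $\xbar=\Xbfoil\Gnew^{-1}\cb$ is not annihilated by the projection onto $\ker(\Mbar)$ --- exactly the ``delicate step'' you flag but do not resolve. Your own null-space observation closes it: setting $\mathbf{z}:=\mathbf{E}\Gnew^{-1}\cb\in\ker(\Mbar)$, so that $\Qproj\mathbf{z}=\mathbf{z}$, and using $\mathbf{E}^{\top}\Xbfoil=\Gnew$ gives $\mathbf{z}^{\top}\QprojT\xbar=\mathbf{z}^{\top}\xbar=\cbT\Gnew^{-1}\cb>0$, hence $\QprojT\xbar\neq\zero$ and $L>0$. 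With the order of operations corrected and this computation added, your proposal becomes essentially the paper's proof.
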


\begin{proof}
    The proof is presented in Appendix~\ref{appendixA}.
\end{proof}

Note that $\ub$ is part of the internal variables of the inductance-like element in the foil conductor model and is not explicitly coupled to the circuit.
Therefore, its behavior will not influence the circuit itself, and it can be left out of the proof.
Examinations suggest that it is an index-2 variable.
This, however, does not influence the index of the circuit's variables.

\subsection{(Singularly perturbed) Resistance-like behavior}
Similarly to the analysis for \eqref{eq:stranded_form}, the original system \eqref{eq:foilm} with the
original turn-by-turn conductance matrix $\Gb$ as defined in \eqref{eq:G_old_def} can be classified according to 
the generalized circuit elements of  Ref.~\cite{Cortes-Garcia_2020ag}. 

\begin{proposition} \label{prop:rlike_proof}
The foil conductor model \eqref{eq:foilm} with the original turn-by-turn conductance matrix \eqref{eq:G_old_def} is
a resistance-like element. 
\end{proposition}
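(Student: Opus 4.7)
The plan is to transform the original model \eqref{eq:foilm} into the resistance-like form \eqref{eq:rlike_didt} and to verify the sign condition $\partial_{\vol'}g_{\mathrm{R}}>0$. The reduction parallels the one leading to \eqref{eq:stranded_form} in the inductance-like case, with one decisive twist: because $\Gold\neq\XbfoilT\Mbsigma^{+}\Xbfoil=\Gnew$, the effective mass matrix $\Mbsigma-\Xbfoil\Gold^{-1}\XbfoilT$ lacks the convenient kernel structure that made the $\Gnew$-reduction close as an inductance-like ODE. This missing structure is precisely what forces $\DIFF\vol$ to appear explicitly after one time differentiation.

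First, I would note that $\Gold$ is symmetric positive definite by inspection of \eqref{eq:G_old_def} — it is the Gram matrix of $\{\poly_k\}$ with the strictly positive weight $\sigma(\vdf\cdot\vdf)$ on $\Omegafw$ — hence invertible with $R\coloneqq\cbT\Gold^{-1}\cb>0$. Solving \eqref{eq:foilmb} for $\ub=\Gold^{-1}(\XbfoilT\DIFF\ab+\cb\,\cur)$ and substituting into \eqref{eq:foilma} and \eqref{eq:foilmc} produces the reduced pair
\begin{align*}
    (\Mbsigma-\Xbfoil\Gold^{-1}\XbfoilT)\,\DIFF\ab + \Kbnu\ab &= \xbar\,\cur, \\
    \xbarT\,\DIFF\ab + R\,\cur &= \vol,
\end{align*}
where $\xbar\coloneqq\Xbfoil\Gold^{-1}\cb$. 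The first line supplies the internal dynamics (furnishing $\mathbf{f}_{\boldsymbol{x}}$ in Definition~\ref{def:ilike}); the second is purely algebraic in $\cur$.

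Second, differentiating the algebraic relation once yields $R\,\DIFF\cur=\DIFF\vol-\xbarT\DIFF^{2}\ab$. To express $\DIFF^{2}\ab$ in closed form I would differentiate the reduced first equation and use Assumption~\ref{ass:ilike_proof}\ref{ass:ilike_proof_ii} on the regularity of the pencil $\tau\Mbsigma+\Kbnu$ to invert the effective mass operator on the gauged subspace. Substituting back and gathering $\DIFF\cur$ on the left gives an implicit relation of the form $\alpha\,\DIFF\cur=\DIFF\vol+\mathcal{F}(\ab,\DIFF\ab,\cur)$ with $\alpha=R+\xbarT(\Mbsigma-\Xbfoil\Gold^{-1}\XbfoilT)^{-1}\xbar$, so that $\partial_{\vol'}g_{\mathrm{R}}=1/\alpha$. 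Combined with the $\DIFF\ab$ equation this is exactly the form \eqref{eq:rlike_didt}.

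The hard part will be establishing $\alpha>0$. The contribution $R$ is positive by Step~1, but $\xbarT(\Mbsigma-\Xbfoil\Gold^{-1}\XbfoilT)^{-1}\xbar$ has no obvious sign \emph{a priori} because, unlike in the $\Gnew$ case, the effective mass matrix is not automatically positive semi-definite. I expect the cleanest route to be a Schur-complement analysis of the block matrix $\bigl(\begin{smallmatrix}\Mbsigma & -\Xbfoil\\ -\XbfoilT & \Gold\end{smallmatrix}\bigr)$ on the gauged subspace, exploiting the positive definiteness of $\Gold$ to reduce $\alpha>0$ to a quadratic-form inequality verifiable directly from the FE discretisation. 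Carrying out the gauge bookkeeping carefully so that all inversions involving the singular $\Mbsigma$ are justified will be the main technical investment.
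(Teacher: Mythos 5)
Your overall strategy---eliminate $\ub$ via $\Gold^{-1}$, reduce \eqref{eq:foilm} to a stranded-conductor-like pair, and read off $\partial_{\vol'}g_{\mathrm{R}}$ after one time differentiation---is a genuinely different route from the paper's, which keeps $\ub$ as an internal variable and splits the un-reduced system with projectors $\Qsigma,\Psigma$ onto $\mathrm{ker}(\Mbsigma)$ and its complement; and your coefficient $\alpha=R+\xbarT(\Mbsigma-\Xbfoil\Gold^{-1}\XbfoilT)^{-1}\xbar$ is, by the standard block-inverse identity, formally the paper's $\cbT(\Gold-\XbfoilT\Mbsigma^{+}\Xbfoil)^{-1}\cb=\cbT(\Gold-\Gnew)^{-1}\cb$. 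However, there is a genuine gap at the decisive step. The matrix $\Mbsigma-\Xbfoil\Gold^{-1}\XbfoilT$ is never invertible: its kernel contains $\mathrm{ker}(\Mbsigma)$, i.e., all degrees of freedom outside the conducting region, and Assumption~\ref{ass:ilike_proof}\ref{ass:ilike_proof_ii} regularizes the pencil $\tau\Mbsigma+\Kbnu$, not this effective mass matrix. Once you introduce the unavoidable projector onto $\mathrm{ker}(\Mbsigma-\Xbfoil\Gold^{-1}\XbfoilT)$, the kernel component of $\DIFF\ab$ must be recovered by differentiating the algebraic constraint, and this injects a term proportional to $\DIFF\cur$, with coefficient built from the projection of $\xbar$ onto that kernel, \emph{already into the undifferentiated voltage equation}. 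If that projection is nonzero, the element is inductance-like (this is exactly the mechanism of Appendix~\ref{appendixA}); your reduction closes in the resistance-like form only if $\xbar$ has no component in the kernel, which happens precisely when $\mathrm{ker}(\Mbsigma-\Xbfoil\Gold^{-1}\XbfoilT)=\mathrm{ker}(\Mbsigma)$, i.e., when $\Gold-\Gnew$ is nonsingular. Your sketch never isolates this condition, so as written the same argument would ``prove'' that the $\Gnew$-model is resistance-like as well, contradicting Proposition~\ref{prop:ilike_proof}. The paper's proof surfaces the hinge quantity $\Gold-\XbfoilT\Mbsigma^{+}\Xbfoil$ explicitly, using $\Qsigma\Xbfoil=\zero$, which is the real content of the proposition.

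On the positivity of $\alpha$: you correctly identify it as delicate, but the paper does not establish it either---it states the element is (strongly) resistance-like conditional on $(\cbT(\Gold-\Gnew)^{-1}\cb)^{-1}$ being positive definite and relegates the degeneracy to the ``singularly perturbed'' Remark, since $\Gold-\Gnew\to 0$ under mesh refinement. So the Schur-complement positivity analysis you plan is not what the proof hinges on; identifying and exploiting the nonsingularity of $\Gold-\Gnew$ is.
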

\begin{proof}
	The proof is given in Appendix~\ref{appendixB}.
\end{proof}

The key difference between both cases is that, whereas in our redefined conductance we replace $\Gold$ with $\Gnew$ and, therefore, $\frac{\partial g_{\mathrm{R}}}{\partial \vol'} = (\cbT(\Gold - \Gnew)^{-1}\cb)^{-1}=0$ (see Appendix~\ref{appendixB}), in the original conductance computation, $\Gold \neq \Gnew$.
Intuitively, this inconsistency arises as $\Gold$ corresponds to the natural discretization of the foil conductor's conductance but only $\Gnew$ is consistent with the discrete spaces spanned by the finite element matrices.

\begin{remark}
	We say the foil conductor model \eqref{eq:foilm} with the original turn-by-turn conductance matrix \eqref{eq:G_old_def} is singularly perturbed resistance-like, as its resistance-like behavior depends on the positive definiteness of
	$\frac{\partial g_{\mathrm{R}}}{\partial \vol'} =(\cbT(\Gold - \Gnew)^{-1}\cb)^{-1}$.
	This expression imposes the (linear) relation between $\DIFF \cur$ and $\DIFF \vol$ in \eqref{eq:rlike_didt}.
	Thus, if the term is positive definite, the element is resistance-like.
	However, when refining the finite element discretization ($N_w, N_\poly\to\infty$), that term tends to zero, and the model degenerates into an inductance-like element.
\end{remark}

\section{Numerical Results}\label{sec:numresults}
A numerical implementation of the foil conductor model according to \eqref{eq:foilm} is done for both of the turn-by-turn conductance matrix definitions \eqref{eq:G_old_def}
and \eqref{eq:G_new_def} using the FE simulation framework \emph{Pyrit} \cite{Bundschuh_2022ae}.
The considered 2D axisymmetric modeling domain is shown in Fig.~\ref{fig:simulation_geometry}.
Table \ref{tab:parameters} contains the simulation specifications and the values used for the material parameters.
Discretization in the time domain is done using the implicit Euler method with a constant time-step length.

\begin{figure}
    \centering
    \hspace{-1cm}
    \begin{tikzpicture}[x=0.06cm,y=0.06cm, declare function={slope = -1;
		conductorbrx = \widthinner + \distancetoinner;
		zdist = 0.5*\height - \slope*(\conductorbrx+0.5*\widthconductor;
		myfunction(\x) = slope*\x + zdist;
		myfunctioninv(\y) = (\y-zdist)/slope;}]
	\def\width{40}
	\def\height{76.2}
	\def\widthinner{9.9}
	\def\widthbot{9.9}
	\def\widthtop{9.9}
	\def\widthouter{10.45}
	\def\airgap{4.2}
	\def\widthconductor{14}
	\def\heightconductor{50}
	\def\distancetoinner{2.7}
	
	\pgfmathparse{\widthinner + \distancetoinner}
	\pgfmathsetmacro\conductorbrx{\pgfmathresult}  
	\pgfmathparse{0.5*\height-0.5*\heightconductor}
	\pgfmathsetmacro\conductorbry{\pgfmathresult}  
	
	\coordinate (cbr) at (\conductorbrx,\conductorbry);
	
	\draw[fill=gray4] (0,0) -- (\width,0) -- (\width,\height) -- (0,\height) -- (0,0.5*\height+0.5*\airgap) -- ++(\widthinner,0) -- (\widthinner,\height-\widthtop) -- (\width-\widthouter,\height-\widthtop) -- (\width-\widthinner,\widthbot) -- (\widthinner,\widthbot) -- (\widthinner,0.5*\height-0.5*\airgap) -- ++(-\widthinner,0) -- (0,0);
	\draw[fill=accentcolor!40] (cbr) rectangle ++(\widthconductor,\heightconductor);
	
	\def\distaxis{10}
	\draw[->] (0,0.5*\height) -- ++(\width + \distaxis,0) node[below] {$r$};
	\draw[->, dash dot] (0,-0.5*\distaxis) -- (0,\height+\distaxis) node[right] {$z$};
	
	\def\slope{-1}
	\pgfmathparse{0.5*\height - \slope*(\conductorbrx+0.5*\widthconductor)}
	\pgfmathsetmacro\zdist{\pgfmathresult}

	\draw (0,0) -- (\width,0) -- (\width,\height) -- (0,\height) -- (0,0.5*\height+0.5*\airgap) -- ++(\widthinner,0) -- (\widthinner,\height-\widthtop) -- (\width-\widthouter,\height-\widthtop) -- (\width-\widthinner,\widthbot) -- (\widthinner,\widthbot) -- (\widthinner,0.5*\height-0.5*\airgap) -- ++(-\widthinner,0) -- (0,0);

	\node (iron) at (-30,55) {Yoke};
	\node[below right] (airgap) at (iron.south west) {Air gap};
	\node[below right] (windings) at (airgap.south west) {Winding};
	\node[below right] (air) at (windings.south west) {Air};

	\draw (\width-0.5*\widthouter,0.53*\height) -- (iron);
	\draw (\width-\widthouter-1,0.25*\height) -- (air);
	\draw (\conductorbrx + 0.3*\widthconductor,0.35*\height) -- (windings);
	\draw (.6*\widthinner,0.51*\height) -- (airgap);
\end{tikzpicture}
    \caption{Simulation domain for the numerical tests.}
    \label{fig:simulation_geometry}
\end{figure}
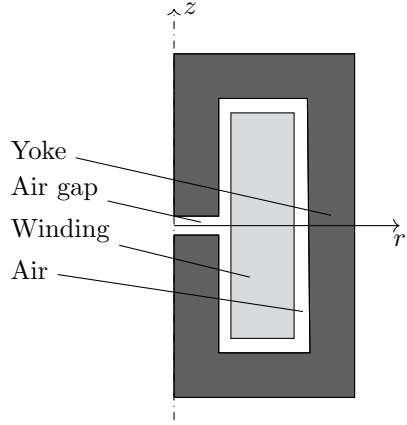

\begin{table}
    \caption{Simulation specifications and material parameters.\label{tab:parameters}}
    \centering
    \begin{tabular}{|c|c|c|}
    \hline Quantity & Symbol & Value \\
    \hline Number of voltage basis functions & $N_{p}$ & \SI{5}{} \\
    \hline Number of foils & $N$ & \SI{50}{} \\
    \hline Fill factor & $\lambda$ & \SI{0.8}{} \\
    \hline Foil thickness & $b$ & \SI{0.28}{\milli\meter} \\
    \hline Foil height & $\ellbeta$ & \SI{50}{\milli\meter} \\
    \hline Air gap length & - & \SI{4.2}{\milli\meter} \\
    \hline Yoke height & - & \SI{76.2}{\milli\meter} \\
    \hline Yoke outer radius & - & \SI{40}{\milli\meter} \\
    \hline Frequency & $f$ & \SI{50}{\hertz} \\
    \hline Perturbation frequency & $f_{\epsilon}$ & $\SI{2\pi e10}{\hertz}$ \\
    \hline Perturbation amplitude & $\epsilon$ & \num{e-3} \\
    \hline Foil winding conductivity & $\sigma$ & \SI{6e7}{\siemens\per\meter} \\
    \hline Yoke conductivity & - & \SI{10}{\siemens\per\meter} \\
    \hline Yoke relative permeability & - & \num{1000} \\
    \hline 
    \end{tabular}
\end{table}

The first test case is to demonstrate the consequences of Proposition \ref{prop:ilike_proof} for the simulation of the foil conductor model.
The proposed new turn-by-turn conductance matrix \eqref{eq:G_new_def} is used, and the modeling domain is spatially discretized with a coarse  mesh consisting of \num{1397} nodes.
A voltage-driven foil winding is known to yield a system of DAEs with differentiation index 1, whereas the current-driven counterpart is an index-2 system.
The sensitivity towards noise that these systems exhibit is examined by exciting them with a sinusoidal input which is perturbed with an additional sinusoid with small amplitude but high frequency. 
The magnitude of both the source voltage and current is given as $\sin(2\pi f t) + \epsilon \sin(2\pi f_{\epsilon}t)$.

Figure \ref{fig:FW_idriven} shows the voltage over the current-fed foil winding.
The perturbations of the source current are clearly amplified in the voltage output over the foil winding, and the amplification increases when the time-step length is reduced.
When the model is excited with a voltage source, no perturbations are visible in the current through the foil winding, as can be seen in Fig.~\ref{fig:FW_vdriven}.
This corresponds to the expected behavior of an inductance-like element, which is less sensitive towards perturbations when excited with a voltage source than with a current source.

\begin{figure}
    \centering
    \subfloat[]{\label{fig:FW_idriven}
    \scalebox{1}{
    \begin{circuitikz}[baseline={(0,0)}]
        \draw[font=\customfont\selectfont, line width=\linecustom] (0,0) to [I=$i_{\mathrm{s}}(t)$] (0,2.5) to (2.5,2.5) to [generic= FW, n=FW] (2.5,0) 
        to [short] (0,0);
        \foilwinding{FW}
    \end{circuitikz}
    }
    \hspace{0.25cm}
    \scalebox{1}{
    \begin{tikzpicture}[baseline={(0,0)}]
        \clip (-1.25,-1) rectangle (6.1,3.5);
        \begin{axis}[
            width=5cm,
            height=5cm,
            xlabel={Time (\si{\milli\second})},
            ylabel={Voltage (\si{\volt})},
            y label style={xshift=0cm, yshift=-0.2cm, font=\customfont\selectfont},
            x label style={xshift=0cm, yshift=0cm, font=\customfont\selectfont},
            xmin=0, xmax=0.022,
            ymin=-0.2, ymax=0.2,
            axis lines=left,
            xtick={0,0.01,0.02},
            every x tick/.append style = {line width=\linecustom},
            xticklabels={$0$,$10$,$20$},
            every x tick label/.append style = {font=\customfont\selectfont},
            ytick={-0.15,0,0.15},
            every y tick/.append style = {line width=\linecustom},
            every y tick label/.append style = {font=\customfont\selectfont},
            legend style={at={(1.02,0.145)}, anchor=west, font=\legendfont\selectfont},
            line width=\linecustom,
            hide scale
            ]
            \addplot[color=TUDa-3d, line width=\linecustom] table[x=t,y=v,col sep=comma]{images/data/Ilike_ifed_dt1e-5.csv};
            \addplot[color=black, line width=\linecustom] table[x=t,y=v,col sep=comma]{images/data/Ilike_ifed_dt1e-4.csv}; 
            \addlegendentry{$\Delta t = \SI{e-5}{\second}$}
            \addlegendentry{$\Delta t = \SI{e-4}{\second}$}
        \end{axis}
    \end{tikzpicture}
    }
    \\
    \vspace{0.7cm}
    \subfloat[]{\label{fig:FW_vdriven}
    \hspace{0.37cm}
    \scalebox{1}{
    \begin{circuitikz}[baseline={(0,0)}]
        \draw[font=\customfont\selectfont, line width=\linecustom] (0,0) to [V] (0,2.5) to (2.5,2.5) to [generic=FW, n=FW] (2.5,0) 
        to [short] (0,0)
        (0.55,0.75) to [open, v=$ $] (0.55,1.75)
        (0.55,1.25) node[right]{$v_{\mathrm{s}}(t)$};
        \foilwinding{FW}
    \end{circuitikz}
    }
    \hspace{0.2cm}
    \scalebox{1}{
    \begin{tikzpicture}[baseline={(0,0)}]
        \clip (-1.3,-1) rectangle (6.1,3.5);
        \begin{axis}[
            width=5cm,
            height=5cm,
            xlabel={Time (\si{\milli\second})},
            ylabel={Current (\si{\ampere})},
            y label style={xshift=0cm, yshift=-0.15cm, font=\customfont\selectfont},
            x label style={xshift=0cm, yshift=0cm, font=\customfont\selectfont},
            xmin=0, xmax=0.022,
            ymin=-4, ymax=17,
            axis lines=left,
            xtick={0,0.01,0.02},
            every x tick/.append style = {line width=\linecustom},
            xticklabels={$0$,$10$,$20$},
            every x tick label/.append style = {font=\customfont\selectfont}, 
            ytick={-2.5,0,2.5,5,7.5,10,12.5,15},
            every y tick/.append style = {line width=\linecustom},
            every y tick label/.append style = {font=\customfont\selectfont}, 
            legend style={at={(1.02,0.145)}, anchor=west, font=\legendfont\selectfont},
            line width=\linecustom,
            hide scale
            ]
            \addplot[color=TUDa-3d, line width=\linecustom] table[x=t,y=i,col sep=comma]{images/data/Ilike_vfed_dt1e-5.csv};
            \addplot[color=black, line width=\linecustom, dashed] table[x=t,y=i,col sep=comma]{images/data/Ilike_vfed_dt1e-4.csv}; 
        \end{axis}
    \end{tikzpicture}
    }
	\caption{Current- and voltage-driven foil winding.
             \protect\subref{fig:FW_idriven} The perturbations that are added to the source current are amplified in the voltage over the foil winding.
             \protect\subref{fig:FW_vdriven} No amplification of the perturbations of the source voltage occurs.}
    \label{fig:FW_iLike}
\end{figure}
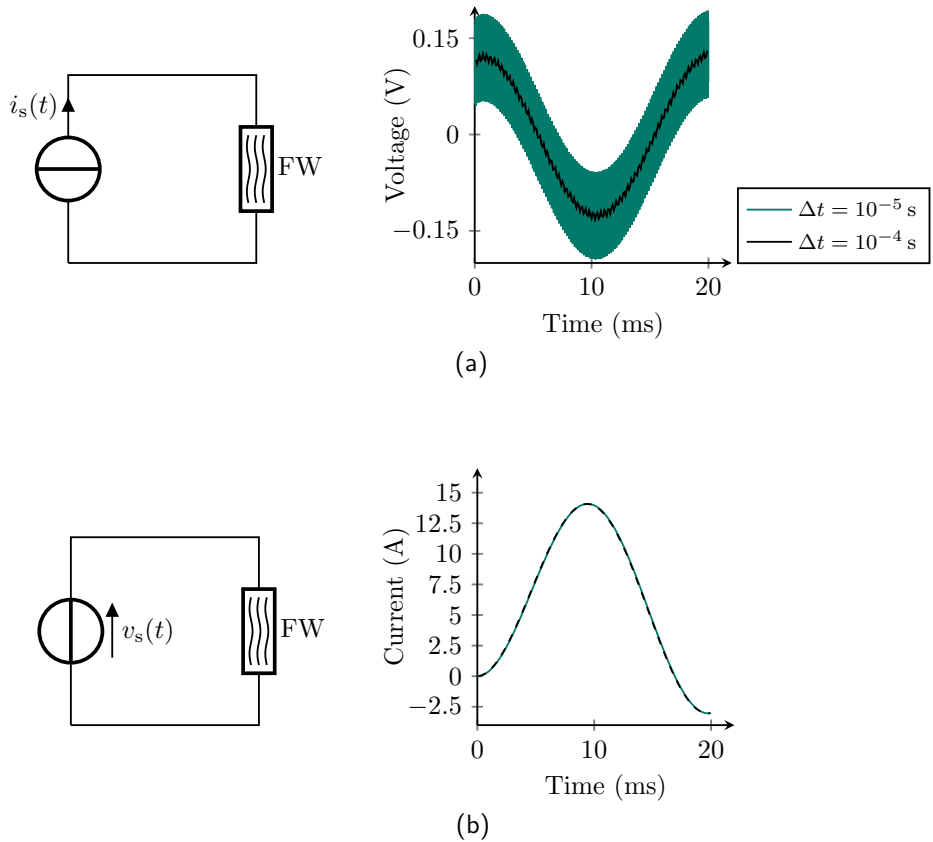

In the following, we compare the numerical behavior of the foil conductor model when using the two different turn-by-turn conductance matrices. 
The mismatch between the matrices $\lVert \Gold - \Gnew \rVert$ is varied by refining the mesh. 
The earlier simulation setting is kept, and now only the current-driven model is examined.
A time-step length of $\Delta t = \SI{e-4}{\second}$ is used.

Figure \ref{fig:FW_rlike} shows the effect of reducing $\lVert \Gold - \Gnew \rVert$ on the simulated voltage waveform.
When $\lVert \Gold - \Gnew \rVert \rightarrow 0$ through mesh refinement, the models coincide (numerically).
This shows how the foil conductor model with the turn-by-turn conductance matrix $\Gold$ degenerates into an inductance-like element.
With increasing $\lVert \Gold - \Gnew \rVert$, the model becomes increasingly unstable and eventually diverges.
A similar instability is not observed when using the proposed matrix $\Gnew$.

\begin{figure}
    \centering
    \scalebox{1}{
    \subfloat[]{
        \label{fig:rlike_mesh1}
        \begin{tikzpicture}[baseline={(0,0)}]
            \clip (-1.25,-1) rectangle (4.5,4.6);
            \begin{axis}[
                width=5cm,
                height=5cm,
                xlabel={Time (\si{\milli\second})},
                ylabel={Voltage (\si{\volt})},
                y label style={xshift=0cm, yshift=-0.2cm, font=\customfont\selectfont},
                x label style={xshift=0cm, yshift=0cm, font=\customfont\selectfont},
                xmin=0, xmax=0.022,
                ymin=-0.2, ymax=0.2,
                axis lines=left,
                xtick={0,0.01,0.02},
                every x tick/.append style = {line width=\linecustom},
                xticklabels={$0$,$10$,$20$},
                every x tick label/.append style = {font=\customfont\selectfont},
                ytick={-0.15,0,0.15},
                every y tick/.append style = {line width=\linecustom},
                every y tick label/.append style = {font=\customfont\selectfont},
                legend style={at={(0.45,1.33)}, anchor=north, font=\legendfont\selectfont},
                line width=\linecustom,
                hide scale
                ]
                \addplot[color=TUDa-3d, line width=\linecustom] table[x=t,y=v,col sep=comma]{images/data/original_ifed_mesh_1.csv};
                \addplot[color=black, line width=\linecustom] table[x=t,y=v,col sep=comma]{images/data/regularized_ifed_mesh_1.csv}; 
                \addlegendentry{$\Gold$ Eq. \eqref{eq:G_old_def}}
                \addlegendentry{$\Gnew$ Eq. \eqref{eq:G_new_def}}
            \end{axis}
        \end{tikzpicture}
    }
}
\hspace{-1em}
\scalebox{1}{
    \subfloat[]{
        \label{fig:rlike_mesh2}
        \begin{tikzpicture}[baseline={(0,0)}]
            \clip (-1.25,-1) rectangle (4.42,3.5);
            \begin{axis}[
                width=5cm,
                height=5cm,
                xlabel={Time (\SI{}{\milli\second})},
                ylabel={Voltage (\SI{}{\volt})},
                y label style={xshift=0cm, yshift=-0.2cm, font=\customfont\selectfont},
                x label style={xshift=0cm, yshift=0cm, font=\customfont\selectfont},
                xmin=0, xmax=0.022,
                ymin=-0.2, ymax=0.2,
                axis lines=left,
                xtick={0,0.01,0.02},
                every x tick/.append style = {line width=\linecustom},
                xticklabels={$0$,$10$,$20$},
                every x tick label/.append style = {font=\customfont\selectfont},
                ytick={-0.15,0,0.15},
                every y tick/.append style = {line width=\linecustom},
                every y tick label/.append style = {font=\customfont\selectfont},
                legend style={at={(1.02,0.145)}, anchor=west, font=\legendfont\selectfont},
                line width=\linecustom,
                hide scale
                ]
                \addplot[color=TUDa-3d, line width=\linecustom] table[x=t,y=v,col sep=comma]{images/data/original_ifed_mesh_0.2.csv};
                \addplot[color=black, line width=\linecustom, dashed] table[x=t,y=v,col sep=comma]{images/data/regularized_ifed_mesh_0.2.csv}; 
            \end{axis}
        \end{tikzpicture}
    }
}
    \caption{The voltage over the current driven foil winding with the different turn-by-turn conductance matrix definitions and mesh refinement.
    Different meshes with \protect\subref{fig:rlike_mesh1} \num{103} and \protect\subref{fig:rlike_mesh2} \num{1397} nodes.}
    \label{fig:FW_rlike}
\end{figure}
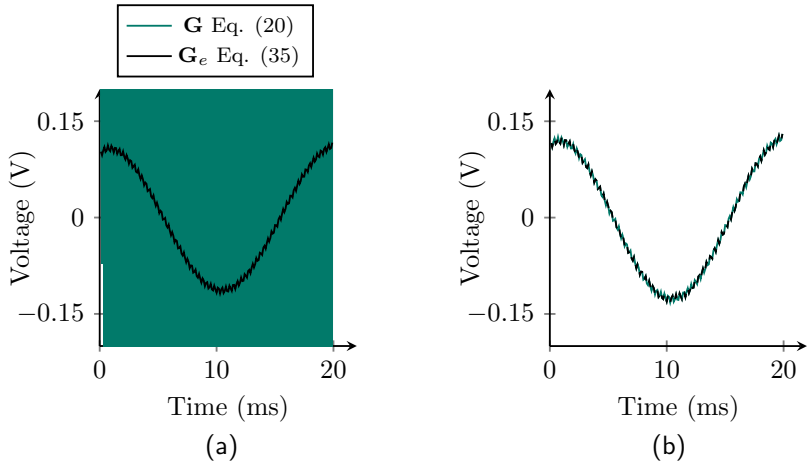

\section{Conclusion}
This paper demonstrates that the classical definition of the foil conductor model is inconsistent in terms of circuit theory, i.e., the field model behaves in a circuit rather like a (singularly perturbed) resistor instead of an inductor.
For coarse discretizations this may lead to instabilities in the time-stepping process.
It is shown that a simple modification of the turn-by-turn conductance matrix mitigates this problem and leads provably to an inductance-like behavior.
This is consistent with the behavior of eddy current fields excited with other conductor models such as the solid and stranded conductor ones.
The modification is always consistent, easy to implement in existing codes, and only marginally increases the computational cost. 

\begin{appendices}
\section{Proof of Proposition \ref{prop:ilike_proof}}
\label{appendixA}
Define projector $\Qproj$ onto $\mathrm{ker}(\Mbar)$, and its complementary $\Pproj = \mathbf{I} - \Qproj$.
The projectors enable splitting (\ref{eq:stranded_form_a})

\begin{subequations} \label{eq:str_projectors}
    \begin{align} 
        \QprojT \Kbnu \ab &= \QprojT \xbar \cur \label{eq:str_projectors_a} \\
        \Mbar \DIFF \ab  + \PprojT \Kbnu \ab &= \PprojT \xbar \cur. \label{eq:str_projectors_b} 
    \end{align}
\end{subequations}
The matrix $\Mbar + \QprojT \Qproj$ is symmetric positive definite due to the definition of projector matrices and the symmetry of $\Mbar$.
Multiplication of (\ref{eq:str_projectors_b}) with ${(\Mbar + \QprojT \Qproj)^{-1}}$ and carrying out only algebraic manipulations yields

\begin{align}
    \Pproj \DIFF \ab &= \left( \Mbar + \QprojT \Qproj \right)^{-1} \left( - \PprojT \Kbnu \ab + \PprojT \xbar \cur \right) \nonumber \\ &= \fP. \label{eq:Sdadt} 
\end{align}
One differentiation of (\ref{eq:str_projectors_a}) with respect to time, and multiplication by \newline ${(\QprojT \Kbnu \Qproj + \PprojT \Pproj)^{-1}}$ gives  

\begin{equation}
    \Qproj \DIFF \ab = \left( \QprojT \Kbnu \Qproj + \PprojT \Pproj \right)^{-1} \left( - \QprojT \Kbnu \Pproj \DIFF \ab + \QprojT \xbar \DIFF \cur \right). \label{eq:Gdadt} 
\end{equation}
Property \ref{ass:ilike_proof_ii} of Assumption \ref{ass:ilike_proof} ensures that the matrix $\QprojT \Kbnu \Qproj + \PprojT \Pproj$ is positive definite.
Substituting $\DIFF \ab$ in \eqref{eq:stranded_form_b} allows solving the resulting equation with respect to the time derivative of the current

\begin{align}
    \DIFF \cur &= L^{-1} \left[ \vol - R\cur - \xbarT \left( \mathbf{I} - \Qproj \left( \QprojT \Kbnu \Qproj + \PprojT \Pproj \right)^{-1} \QprojT \Kbnu \right) \fP \right] \nonumber \\
    &= \fI,
\end{align}
where $\mathbf{I}$ is an identity matrix.
The previous step required the inversion of $L =~\xbarT \Qproj \left( \QprojT \Kbnu \Qproj + \PprojT \Pproj \right)^{-1} \QprojT \xbar$, which is always possible when $L \neq 0$.
This is guaranteed as it can be shown that $\QprojT \xbar$ has full column rank. Consequently, $L$ is positive (definite).

Substituting $\DIFF \cur$ to (\ref{eq:Gdadt}) yields $\Qproj \DIFF \ab = \fQ$.
The explicit ODEs in Definition \ref{def:ilike} have been obtained with only one time differentiation of \eqref{eq:stranded_form}, where the internal variables $\xb = \ab$.

\section{Proof of Proposition \ref{prop:rlike_proof}}
\label{appendixB}
Similarly as in the proof in Appendix~\ref{appendixA}, we start by splitting (this time) the original discretization of the eddy current equation \eqref{eq:foilma} with the projectors $\Qsigma$ onto $\ker \Mbsigma$ and its complementary $\Psigma$. This leads to
\begin{subequations}
	\begin{align}
		\Mbsigma\DIFF\ab + \Psigma\Kbnu \ab - \Psigma\Xbfoil \ub &= \zero \label{eq:Psigmaeddycurr}\\
		\Qsigma\Kbnu \ab - \Qsigma\Xbfoil \ub &= \zero. \label{eq:Qsigmaeddycurr}
	\end{align}
\end{subequations}
With \eqref{eq:Psigmaeddycurr} one obtains
\begin{equation}\label{eq:Psigmadta}
	\Psigma\DIFF\ab = \left(\Mbsigma+\Qsigma^\top\Qsigma\right)^{-1}\left(-\Psigma\Kbnu \ab + \Psigma\Xbfoil \ub\right).
\end{equation}
One time differentiation of \eqref{eq:Qsigmaeddycurr} and using the property that $\Qsigma\Xbfoil = 0$ due to $\Xbfoil$ being zero outside the conducting region, we have
\begin{align}
	\Qsigma\DIFF\ab = -&\left(\Qsigma\Kbnu\Qsigma + \Psigma^\top\Psigma\right)^{-1}\Qsigma\Kbnu\nonumber\\ &\left(\Mbsigma+\Qsigma^\top\Qsigma\right)^{-1}\Psigma\left(\Kbnu \ab - \Xbfoil \ub\right). \label{eq:Qsigmadta}
\end{align}
With these two equations we obtained an ODE for $\DIFF\ab = \Psigma\DIFF\ab + \Qsigma \DIFF\ab$ with at most one time differentiation
of the original system. Inserting now \eqref{eq:Psigmadta} into the equation for $\ub$, \eqref{eq:foilmb} leads to
\begin{align}
	\ub = &\left(\Gold - \XbfoilT\left(\Mbsigma+\Qsigma^\top\Qsigma\right)^{-1}\Xbfoil\right)^{-1} \nonumber\\
	     &\left(-\XbfoilT\left(\Mbsigma+\Qsigma^\top\Qsigma\right)^{-1}\Psigma\Kbnu \ab + \cb \cur\right).
\end{align}
Differentiating the latter expression once in time and using \eqref{eq:Psigmadta}-\eqref{eq:Qsigmadta} gives
\begin{equation}
	\DIFF \ub = (\Gold - \XbfoilT(\Mbsigma+\Qsigma^\top\Qsigma)^{-1}\Xbfoil)^{-1}\cb \DIFF \cur + \mathbf{f}_{\ub}(\ab, \ub),\label{eq:dtu}
\end{equation}
which is an ODE-like expression for $\DIFF \ub$. 
Note that, in contrast to the formal definition of a resistance-like element, $\DIFF \ub$ depends on $\DIFF \cur$. This, however, does not change the index results of Ref.~\cite{Cortes-Garcia_2020ag} and therefore the element still has the same behavior as a resistance-like element within a circuit.
Now that we have obtained expressions for the internal variables of the element $\DIFF \ab$ and $\DIFF \ub$, we look for the final relation between the current $\cur$ and voltage $\vol$. This is recovered by differentiating \eqref{eq:foilmc} once and inserting \eqref{eq:dtu}. Hereby, the voltage-to-current relation
\begin{align}\label{eq:dtvdti}
	\DIFF \vol = \cbT(\Gold - \XbfoilT\Mbsigma^{+}\Xbfoil)^{-1}\cb \DIFF \cur +
	\cbT\mathbf{f}_{\ub}(\ab, \ub),
\end{align}
which corresponds to a strongly resistance-like element if 
\begin{align*}
	\frac{\partial g_{\mathrm{R}}}{\partial \vol'} = (\cbT(\Gold - \XbfoilT\Mbsigma^{+}\Xbfoil)^{-1}\cb)^{-1} 
\end{align*}
is positive definite. This is the case as long as 
$\Gold- \XbfoilT\Mbsigma^{+}\Xbfoil$ is nonsingular. In the last expressions we have replaced $\XbfoilT(\Mbsigma+\Qsigma^\top\Qsigma)^{-1}\Xbfoil$
by $\XbfoilT\Mbsigma^{+}\Xbfoil$ with the Moore-Penrose pseudoinverse $\Mbsigma^{+}$. 
This is done to illustrate why $\Gnew=\XbfoilT\Mbsigma^{+}\Xbfoil$ has been chosen and is possible 
	because $\Qsigma\Xbfoil = 0$ due to construction and therefore both expressions are equivalent.
\end{appendices}

\section*{Acknowledgments}
The work of Elias Paakkunainen and Jonas Bundschuh is supported by the Graduate School CE within the Centre for Computational Engineering at {Technische} Universität Darmstadt.
Additionally, support from the German Science Foundation (DFG project 436819664) is acknowledged.

\end{document}